\theoremstyle{plain} 
\newtheorem{theorem}             {Theorem} 
\theoremstyle{definition}
\theoremstyle{plain} 
\newtheorem{proposition} {Proposition}
\theoremstyle{remark}
\newtheorem*{definition*}  {Definition}
\newtheorem*{example*}    {Example}
\newtheorem*{remark*}            {Remark}
\newtheoremstyle{itplain} 
    {6pt}                    
    {5pt\topsep}                    
    {\itshape}                   
    {}                           
    {\itshape}                   
    {.}                          
    {5pt plus 1pt minus 1pt}                       
    {}  
\theoremstyle{itplain} 
\newtheorem{lemma}{Lemma}
\newtheorem*{lemma*}{Lemma}
\newtheorem*{corollary*} {Corollary} 
\theoremstyle{remark} 
\newtheorem*{lemmatest*}{Lemma}
\patchcmd{\section}{\scshape}{\bfseries}{}{}
\renewcommand{\@secnumfont}{\bfseries}
\renewcommand{\Re}{\mathrm{Re}}
\renewcommand{\geq}{\geqslant}
\renewcommand{\leq}{\leqslant}
\numberwithin{equation}{section}
\DeclareMathOperator{\GL}{GL}
\def\eps{\varepsilon}
\def\O{\operatorname{O}}
\author{Roman Holowinsky, Paul D. Nelson}
\address{The Ohio State University, Department of Mathematics, 100 Math Tower, 231 West 18th Ave, Columbus, OH, 43210, USA}
\address{ETH Z{\"u}rich, Department of Mathematics, R{\"a}mistrasse 101, CH-8092, Z{\"u}rich, Switzerland}
\email{holowinsky.1@osu.edu}
\email{paul.nelson@math.ethz.ch}
\subjclass[2010]{11F66, 11M41}
\date{\today}
\title{Subconvex bounds
  on $\GL_3$ via degeneration to frequency zero}
\begin{document}

\begin{abstract}
  For a fixed cusp form $\pi$ on $\GL_3(\mathbb{Z})$
  and a varying Dirichlet character $\chi$ of prime conductor
  $q$,
  we prove that the subconvex bound
  \[
  L(\pi \otimes \chi, \tfrac{1}{2}) \ll q^{3/4 - \delta}
  \]
  holds for any $\delta < 1/36$.  This improves upon the earlier
  bounds $\delta < 1/1612$ and $\delta < 1/308$ obtained by
  Munshi using his $\GL_2$ variant of the $\delta$-method.  The
  method developed here is more direct.
  We first express $\chi$
  as the degenerate zero-frequency contribution of a carefully
  chosen summation formula {\`a} la Poisson.  After an
  elementary ``amplification'' step exploiting the
  multiplicativity of $\chi$, we then apply
  a sequence of standard  manipulations
  (reciprocity, Voronoi, Cauchy--Schwarz and the
  Weil bound) to bound the contributions of the nonzero
  frequencies and of the dual side of that formula.
\end{abstract}
\maketitle

\setcounter{tocdepth}{1}
\tableofcontents

\section{Introduction}
\label{sec-1}
We consider
the
problem of bounding
$L(\pi \otimes \chi,\tfrac{1}{2})$,
where
\begin{itemize}
\item $\pi$ is a fixed cusp form on $\GL_3(\mathbb{Z})$,
  not necessarily self-dual, and
\item  $\chi$
  traverses a sequence of Dirichlet characters $\chi$ of (say) prime
  conductor $q$ tending off to $\infty$.
\end{itemize}
Munshi \cite{MR3418527} recently 
established the first subconvex bound
in this setting
by showing
that if
$\pi$ satisfies
the Ramanujan--Selberg
conjecture,
then
for any fixed $\delta < 1/1612$,
the estimate
\begin{equation}\label{eqn:subconvex-bound}
  |L(\pi \otimes \chi,\tfrac{1}{2})|
  \leq C q^{3/4 - \delta}
\end{equation}
holds
for some positive quantity $C$ that may
depend upon $\delta$ and $\pi$, but
not upon $\chi$.
In the preprint \cite{2016arXiv160408000M},
he
improves the exponent range to $\delta < 1/308$
and
removes the Ramanujan--Selberg assumption.

A striking feature
of his work is the introduction of a novel ``$\GL_2$
$\delta$-symbol method,'' whereby one detects an equality of
integers $n_1 = n_2$ by averaging
several instances of the Petersson trace formula.
We summarize this approach in Appendix \ref{sec:discussion},
referring to \cite{MR3418527} and \cite{2016arXiv160408000M} for
details,
to \cite{2017arXiv170905615M} and \cite{2017arXiv171002354M} for other recent applications of the $\GL_2$
$\delta$-symbol method, and to \cite[\S5.5]{Iw97} for general discussion of the spectral decomposition of the $\delta$-symbol.

It is natural to ask about the true strength of the $\GL_2$
$\delta$-symbol method.
How does it compare to the classical $\delta$-symbol
method of Duke--Friedlander--Iwaniec \cite{DFI93} and
Heath-Brown \cite{MR1421949}?  For which problems does one fail
and the other succeed? For which problems are the two methods
``identical'' or ``equivalent''? Can the $\GL_2$ $\delta$-symbol
method be simplified or removed in certain applications?

In pondering such questions, we were able to better understand
the arithmetical structure and mechanisms underlying Munshi's argument and construct a more direct proof of the following quantitative strengthening of Munshi's bound.
\begin{theorem}\label{thm:main}
  The subconvex bound \eqref{eqn:subconvex-bound}
  holds for any $\delta < \delta_0 := 1/36$.
\end{theorem}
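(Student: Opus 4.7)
My plan is to reduce Theorem~\ref{thm:main} to a sum bound via the approximate functional equation. Since $\pi \otimes \chi$ has analytic conductor $q^3$, the AFE expresses $L(\pi \otimes \chi, \tfrac{1}{2})$ as a smooth dual sum of length $\asymp q^{3/2}$, and \eqref{eqn:subconvex-bound} with $\delta < 1/36$ becomes equivalent to the estimate
\[
S := \sum_{n \geq 1} \lambda_\pi(n) \chi(n) V(n/N) \ll N q^{-\delta}, \qquad N \asymp q^{3/2},
\]
for each smooth bump $V$ of compact support in $(0,\infty)$; the convexity bound corresponds to the trivial estimate $|S| \ll N$, so the target is a saving of $q^\delta$ for any $\delta < 1/36$.

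The key idea, following the abstract, is to realise $\chi$ as the zero-frequency contribution of a Poisson-type summation formula. Concretely, I would fix an auxiliary scale $M$ and a smooth bump $W$ at that scale, and apply Poisson summation to an auxiliary sum attached to $\chi$ and $W$ so as to produce an identity of the schematic form
\[
\chi(n) = T_0(n) + \sum_{k \neq 0} T_k(n),
\]
in which the zero-frequency piece $T_0(n)$ is a closed-form ``main term'' equal to $\chi(n)$ times an explicit normalising constant (hence ``degenerate''), while the nonzero frequencies $T_k(n)$ carry additive oscillations $e(k n \bar{q}/\cdots)$ against weights decaying rapidly in $|k|$. Substituting into $S$, the nonzero-frequency contribution is a sum of additively twisted $\GL_3$-coefficients; I would bound it by the $\GL_3$ Voronoi summation formula followed by the Weil bound on the resulting hyper-Kloosterman sums modulo $q$, which is acceptable provided $M$ lies in a suitable window.

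Before inserting the identity I would amplify $S$ using the multiplicativity of $\chi$: writing $\chi(n) = \bar\chi(\ell)\chi(\ell n)$ for $(\ell,q)=1$ and averaging over a short dyadic window $\ell \sim L$ with weights $\alpha(\ell)$ summing to $1$,
\[
S = \sum_{\ell \sim L} \alpha(\ell) \bar\chi(\ell) \sum_n \lambda_\pi(n) \chi(\ell n) V(n/N).
\]
Opening $\chi(\ell n)$ via the Poisson identity at the shifted argument and applying additive reciprocity to swap the moduli $\ell$ and $q$ in the resulting phase brings the $n$-sum into a form suitable for a second $\GL_3$-Voronoi transform. Cauchy--Schwarz in the outer $\ell$-variable then smooths the $\pi$-Hecke coefficients away via Rankin--Selberg, and the surviving structure is a complete character sum modulo $q$ controlled by the Weil/Deligne bound. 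The main obstacle, and the source of the specific exponent $\delta_0 = 1/36$, will be the simultaneous optimisation of $M$, $L$, and the dual Voronoi length so that the combined saving exceeds $q^{1/36 - \epsilon}$ in every range after paying for the Cauchy--Schwarz doubling loss; the individual tools (Poisson, reciprocity, Voronoi, Cauchy--Schwarz, Weil) are standard in isolation, and the novelty lies in their assembly around the zero-frequency realisation of $\chi$.
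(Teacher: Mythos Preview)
Your broad outline tracks the paper: approximate functional equation, a Poisson-based identity expressing $\chi$ via its zero frequency, amplification, then reciprocity/Voronoi/Cauchy--Schwarz/Weil. But two structural features are misidentified in your plan, and both are essential for reaching $\delta_0 = 1/36$.

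First, the amplification uses \emph{two} independent variables, not one. The paper writes
\[
\Sigma = \sum_{n,s,t} \frac{V(n/N)}{N}\,\lambda(1,n)\,\beta_s\gamma_t\,\chi\!\left(\frac{tn}{s}\right),
\]
with $s$ over primes $\asymp S$ and $t$ over primes $\asymp T$, optimized at $S=q^{1/9}$, $T=q^{5/18}$. The two variables play asymmetric roles: after the Poisson identity and reciprocity, $s$ enlarges the Voronoi modulus to $rs$ (not just $r$), while $t$ sits in the numerator of the phase and later in the Kloosterman argument. With only one multiplicative amplifier $\ell$ as you propose, the diagonal after Cauchy--Schwarz on the $\mathcal{F}$-side forces $T \prec q^{1/4-\delta}$, while the $\mathcal{O}$-side forces $T \succ q^{1/4+\delta}$; these are incompatible, so a single amplifier yields no subconvexity at all. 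The third parameter (your $M$, the paper's $R$) is the Poisson scale and is fixed by the reciprocity condition $qRS \asymp TN$; it is not a free amplifier.

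Second, you have the structure of the nonzero-frequency piece inverted. The paper's Poisson identity (applied in the averaging variable $r$, not in $n$) reads
\[
\frac{\sqrt q}{R}\sum_r W(r/R)\,\chi(r)\,e_q(u/r) \;=\; \sum_h \hat W(h/H)\,\frac{S_\chi(h,u;q)}{\sqrt q},
\]
and the $h=0$ term on the right is $\eps(\bar\chi)\chi(u)$. So $\chi(u)$ equals the left-hand $r$-sum minus the $h\neq 0$ dual terms. The $r$-sum carries the additive characters $e_q(tn/rs)$ and is the piece treated by reciprocity, Voronoi, Cauchy--Schwarz, and Kloosterman correlation bounds, as you sketch. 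But the $h\neq 0$ piece carries \emph{twisted Kloosterman sums} $S_\chi(h,tn/s;q)$, not additive characters, and Voronoi does not apply to it. The paper instead removes $\lambda(1,n)$ by Cauchy--Schwarz, applies Poisson in $n$ (only the zero frequency survives since $N\gg q$), and evaluates the resulting complete sum
\[
\sum_{n(q)} S_\chi(h_1,t_1 n/s_1;q)\,\overline{S_\chi(h_2,t_2 n/s_2;q)}
\]
elementarily; the diagonal is $t_1 s_2 h_2 = t_2 s_1 h_1$, and the saving here comes precisely from having both $s$ and $t$ available. Your proposed ``Voronoi plus hyper-Kloosterman Weil bound'' for this term is aimed at the wrong object.
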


The proof is surprisingly short compared to earlier proofs of
related estimates.
Indeed, we regard the primary novelty of
this work as not in the numerical improvement of the exponent
$\delta$ but rather in the drastic simplification obtained for
the proof of any subconvex bound \eqref{eqn:subconvex-bound}.

Our point of departure is a formula (see
\S\ref{sec:formula-chi}), derived via Poisson summation, that
expresses $\chi$ in terms of additive characters and twisted
Kloosterman sums.  We insert this into an approximate functional
equation for $L(\pi \otimes \chi, 1/2)$.  After an elementary
``amplification'' step exploiting the multiplicativity of
$\chi$, we then conclude via standard manipulations.  We discuss
in Appendix \ref{sec:discussion} how we arrived at this approach
through a careful study of Munshi's arguments.

We hope that the technique described here may be applied to many
other problems.  For instance, it seems natural to ask whether
it allows a simplification or generalization of the
arguments of \cite{2017arXiv170905615M}
for bounding symmetric square $L$-functions.

The works
\cite{MR2753605,MR2975240,
  MR3369905,2017arXiv170304424N,MR3357122,2016arXiv160509487H,
  MR3766865,2017arXiv170500804S,2018arXiv180310973S,2018arXiv180506026Q}
bound twisted $L$-functions on $\GL_3$ in other aspects.  In the
preprint \cite{2018arXiv180205111L}, Yongxiao Lin has
generalized our method to incorporate the $t$-aspect.  The
preprint \cite{2018arXiv180300542A} applies a simpler
technique to the corresponding problem for $\GL_2$.

\section{Preliminaries}
\label{sec-2}
\subsection{Asymptotic notation}
\label{sec:asymptotic-notation}
We work throughout this article
with a 
cusp form $\pi$  on $\GL_3(\mathbb{Z})$
and a
sequence of primitive Dirichlet characters $\chi_{\mathfrak{j}}$
to prime moduli $q_\mathfrak{j}$, indexed
by $\mathfrak{j} \in \mathbb{Z}_{\geq 1}$, with $q_\mathfrak{j} \rightarrow \infty$.
To simplify notation, we drop the subscripts and write simply
$\chi := \chi_\mathfrak{j}$ and $q := q_\mathfrak{j}$.
Our convention is that any
object (number, set, function, ...)  considered below
may depend implicitly upon $\mathfrak{j}$
unless we designate it as \emph{fixed};
it must then be independent of $\mathfrak{j}$.
Thus $\pi$ is understood as fixed, while $\chi$ is not.
All assertions are to be understood
as holding
after possibly passing to
some subsequence $q_{\mathbf{j}_k}$
of the original sequence $q_\mathbf{j}$,
and in particular,
for $\mathfrak{j}$ sufficiently large.

We define standard asymptotic notation accordingly: $A = \O(B)$
or $A \ll B$ or $B \gg A$ means that $|A| \leq c |B|$ for some
fixed $c \geq 0$, while $A = o(B)$ means $|A| \leq c |B|$ for
every fixed $c > 0$ (for $\mathfrak{j}$ large enough, by convention).
We write $A \asymp B$ for $A \ll B \ll A$.
We write $A = \O(q^{-\infty})$
to denote that $A = \O(q^{-c})$ for each fixed $c \geq 0$.
Less standardly,
we write $A \prec B$ or
$B \succ A$ as shorthand for $A \ll q^{o(1)} B$,
or equivalently,
$|A| \leq q^{o(1)} |B|$.
Our goal is then to show that
\begin{equation}\label{eqn:goal-after-notatino}
  L(\pi \otimes \chi, \tfrac{1}{2})
  \prec q^{3/4-\delta_0}.
\end{equation}

We say that 
$V \in C_c^\infty(\mathbb{R}^\times_+)$
is \emph{inert}
if it satisfies the support condition
\[
V(x) \neq 0 \implies x \asymp 1
\]
and the value and derivative bounds
\[
(x \partial_x)^j V(x) \prec 1 \text{ for each fixed } j \geq 0.
\]

\subsection{General notation}
\label{sec-2-2}
We write $e(x) := e^{2 \pi i x}$,
and denote by $\sum_n$ a sum over integers $n$.
Let $c \in \mathbb{Z}_{\geq 1}$.
We write $\sum_{a(c)}$ and $\sum_{a(c)^*}$
to denote sums over $a \in \mathbb{Z}/c$
and $a \in (\mathbb{Z}/c)^*$, respectively.
We denote the inverse of  $x \in (\mathbb{Z}/c)^*$
by $x^{-1}$ or $1/x$.
We denote by $e_c : \mathbb{Z}/c \rightarrow
\mathbb{C}^{\times}$
the additive character
given by $e_c(a) := e^{2 \pi i a/c}$,
by
$S(a,b;c) := \sum_{x(c)^*}
e_c(a x + b x^{-1})$
the Kloosterman sum,
by
$K_c(a) := c^{-1/2}
S(a,1;c)$ the normalized Kloosterman sum,
by $S_\chi(a,b;q) := \sum_{x(q)^*} \chi(x) e_q(a x + b x^{-1})$
the twisted Kloosterman sum,
and
by
$\eps(\overline{\chi}) :=
q^{-1/2} \sum_{a(q)^*} \overline{\chi }(a) e_q(a)$
the normalized Gauss sum
(of magnitude one).

We define the Fourier coefficients $\lambda(m,n)$ of $\pi$
as in \cite{MR3468028},
so that
$L(\pi \otimes \chi,s) = \sum_{n \in \mathbb{Z}_{\geq 1}}
\lambda(1,n) \chi(n) n^{-s}$ for complex numbers $s$ with large
enough real part,
and $\lambda(n,m) = \overline{\lambda(m,n)}$.

For a condition $C$,
we define $1_{C}$ to be $1$ if $C$ holds
and $0$ otherwise.
For instance, $1_{a = b}$ is $1$ if $a=b$
and $0$ if $a \neq b$.

We denote by
$\hat{V}(\xi) := \int_{x \in \mathbb{R}} V(x) e(-\xi x) \, d x$
the Fourier transform
of a Schwartz function
$V$ on $\mathbb{R}$.

For a pair of integers $a,b$,
we denote by $(a,b)$ and $[a,b]$ their the greatest common divisor
and least common multiple, respectively.

\subsection{Voronoi summation formula\label{sec:voronoi}}
\label{sec-2-3}
By \cite{MR2247965} (cf. \cite[\S4]{2017arXiv170601245B}
for the formulation used here),
we have for $V \in C_c^\infty(\mathbb{R}^\times_+)$,
$m, c \in \mathbb{Z}_{\geq 1}, a \in (\mathbb{Z}/c)^*$, and $X > 0$
that
\begin{equation}\label{eq:voronoi-general}
  \sum_{n}
  V(\frac{n}{X})
  \lambda(m,n)
  e_c(a n)
  =
  c
  \sum _{\substack{\pm, n \\ d \mid c m}}
  \mathcal{I}_{\pm}V \left( \frac{n d^2}{c^3 m/X} \right)
  \frac{\lambda(n,d)}{n d}
  S (\frac{m}{a},
  \pm n;
  \frac{m c}{d}),
\end{equation}
for integral transforms
$V \mapsto \mathcal{I}_{\pm} V \in C^\infty(\mathbb{R}^\times_+)$
of the shape
\[
\mathcal{I}_{\pm}V(x)
= \int_{\Re(s) = 1}
x^{-s} \mathcal{G}^{\pm}(s+1) (\int_{y \in \mathbb{R}_+^\times} V(y) y^{-s} \,
\frac{d y}{y} )
\, \frac{d s}{2 \pi i },
\]
where $\mathcal{G}^{\pm}$ is meromorphic on $\mathbb{C}$
and 
holomorphic in the domain $\Re(s) > 5/14$,
where it
satisfies
$\mathcal{G}^{\pm}(s) \ll (1 + |s|)^{\O(1)}$
for fixed $\Re(s)$.
(The indices $n$ and $d$ in
\eqref{eq:voronoi-general} are implicitly
restricted to be positive integers.)
Set $\theta = 5/14 + \eps$ for some sufficiently small
fixed
$\eps > 0$.
By shifting the contour to $\Re(s) = \theta-1$
and to $\Re(s) = A$,
we see that
if $V$ is inert,
then
\begin{equation}\label{eq:voronoi-transform-bound}
  (x \partial_x)^j \mathcal{I}_{\pm} V(x) \ll
  \min(x^{1-\theta}, x^{-A})
\end{equation}
for all fixed $j,A \geq 0$.

In the special case $m = 1$,
we have
$S (m/a, \pm n; m c/d)
= (c/d)^{1/2} K_{c/d}(\pm n/a)$,
and so
\[
\sum_{n}
V(\frac{n}{X})
\lambda(1,n)
e_c(a n)
=
c^{3/2}
\sum _{\substack{\pm, n \\ d \mid c}}
\mathcal{I}_{\pm}V \left( \frac{n d^2}{c^3/X} \right)
\frac{\lambda(n,d)}{n d^{3/2}}
K_{c/d}(\frac{\pm n}{a}).
\]

\subsection{Rankin--Selberg bounds}
By \cite{MR1876443},
we have for each fixed $\eps > 0$
and all $X \geq 1$ that
\begin{equation}\label{eq:rankin-selberg}
  \sum_{n \leq X}
  |\lambda(n,1)|^2
  =
  \sum_{n \leq X}
  |\lambda(1,n)|^2
  \leq 
  \sum_{m,n: m^2 n \leq X}
  |\lambda(m,n)|^2
  \ll
  X^{1+\eps}.
\end{equation}
Using the Hecke relations
as in the proof of \cite[Lem 2]{2016arXiv160408000M},
we deduce that
for all $M, N \geq 1$,
\begin{equation}\label{eq:rankin-selberg-2}
  \sum_{m \leq M, n \leq N}
  |\lambda(m,n)|^2
  \ll
  (M N)^{1+\eps}.
\end{equation}
(Indeed, we may
reduce to considering
the dyadic sums
over $M/2 < m \leq M, N/2 < n \leq N$,
and then to establishing
that $\sum_{X/8 < m^2 n \leq X} m |\lambda(m,n)|^2 \ll X^{1+\eps}$,
which is shown in \emph{loc. cit.})

\section{Division of the proof}
\label{sec-3}

\subsection{Approximate functional equation}\label{sec:appr-funct-equat}
Recall our main goal \eqref{eqn:goal-after-notatino}.
By \cite[\S5.2]{MR2061214},
we may write
\[
L(\pi \otimes \chi, \tfrac{1}{2}) = \sum_n \frac{\lambda(1,n) \chi(n)}{\sqrt{n}}
V_1 (\frac{n}{q^{3/2}}) + \eta \sum_n \frac{\overline{\lambda(1,n)
    \chi(n)}}{\sqrt{n}} V_2 (\frac{n}{q^{3/2}}),
\]
for some $\eta \in \mathbb{C}$ with $|\eta| = 1$ and some smooth
functions
$V_1, V_2 : \mathbb{R}^\times_+ \rightarrow \mathbb{C}$
satisfying $(x \partial_x)^j V_i(x) \ll \min(1, x^{-A})$ for all
fixed $j, A \in \mathbb{Z}_{\geq 0}$.
By a smooth dyadic partition of unity
and the Rankin--Selberg estimate \eqref{eq:rankin-selberg},
it will suffice to show for each $0 < N \prec q^{3/2}$
and each inert $V \in C_c^\infty(\mathbb{R}^\times_+)$
that
the normalized sum
\[
\Sigma := \sum_{n} \frac{V (n/N)}{N}
\lambda(1,n) \chi(n)
\]
satisfies the estimate
\begin{equation}\label{eq:Sigma-goal}
  \Sigma \prec N^{-1/2} q^{3/4 - \delta_0}.
\end{equation}
By further application of \eqref{eq:rankin-selberg},
we may and shall assume further that
\begin{equation}\label{eq:ranges-for-N}
  q^{3/2 - 2 \delta_0} \leq  N \prec q^{3/2}.
\end{equation}

The proof of \eqref{eq:Sigma-goal} will involve positive parameters
$R,S,T$
satisfying
\begin{equation}\label{eq:RST-basic-bounds}
  q^\eps \ll R,S,T \ll q^{1-\eps} \text{ for some fixed } \eps > 0.
\end{equation}
Thus every integer in 
$[R, 2 R] \cup [S, 2 S] \cup [T, 2 T]$ is coprime to $q$.

\subsection{A formula for $\chi$}\label{sec:formula-chi}
Fix a smooth function $W$ on $\mathbb{R}$
supported in the interval $[1,2]$ with $\int W(x) \, d x = 1$.
Then $\hat{W}(0) = 1$.
Observe that $1/r \in \mathbb{Z}/q$ is defined
for all integers $r$ for which $W(r/R) \neq 0$.
Set \[H := q/R.\]
By Poisson summation,
we have
\begin{equation}\label{eq:poisson-for-chi-formula}
  \frac{\sqrt{q}}{R}
  \sum_{r} {W} (\frac{r}{R})
  \chi(r)
  e_q (\frac{u}{r})
  = 
  \sum_h
  \hat{W} (\frac{h}{H})
  \frac{1}{\sqrt{q}}
  \underbrace{
    \sum_{r(q)^*}
    \chi(r)
    e_q (\frac{u}{r})
    e_q(h r)
  }_{
    = S_\chi(h,u;q)
  }.
\end{equation}
For $h \equiv 0 \pmod{q}$,
we have
$S_\chi(h,u;q) = \sqrt{q} \eps(\overline{\chi }) \chi(u)$.
Setting
$\alpha_r :=    \eps(\overline{\chi })^{-1} R^{-1} W(r/R) \chi(r)$,
we deduce by rearranging \eqref{eq:poisson-for-chi-formula}
that
\begin{equation}\label{eq:formula-chi}
  \chi(u)
  =
  q^{1/2}
  \sum_r
  \alpha_r
  e_q (\frac{u}{r})
  -
  \eps(\overline{\chi })^{-1}
  \sum_{h \neq 0}
  \hat{W} (\frac{h}{H})
  \frac{S_\chi(h,u;q)}{\sqrt{q}}.
\end{equation}
The properties of the sequence
$\alpha$ to be used in what follows
are that it is supported on $[R,2 R]$
and satisfies
the estimates $\alpha_r \prec R^{-1}$ and $\sum_r |\alpha_r| \asymp 1$.

\subsection{``Amplification''}
We choose sequences of complex numbers $\beta_s$
and $\gamma_t$
supported on (say) primes in the intervals $[S, 2 S]$ and $[T, 2 T]$,
respectively,
so that
\begin{equation}\label{eqn:beta-gamma-defining-props}
  \beta_s \prec S^{-1}, \quad \gamma_t \prec T^{-1},
  \quad \sum_s \beta_s \overline{\chi}(s) = \sum_t \gamma_t \chi(t) =
  1.
\end{equation}
Then
\begin{equation}\label{eq:Sigma-averaged-S-T}
  \Sigma =
  \sum_{n,s,t}
  \frac{V (n/N)}{N}
  \lambda(1,n)
  \beta_s \gamma_t \chi (\frac{t n}{s}).
\end{equation}
The properties of $\beta_s$ and $\gamma_t$
just enunciated,
rather than an explicit choice,
are all that will be used;
one could take, for instance
$\beta_s := 
\chi(s)
|\mathcal{P} \cap [S, 2 S]|^{-1} 1_{s \in
  \mathcal{P} \cap [S,2 S]}$,
where $\mathcal{P}$ denotes the set of primes,
and similarly for $\gamma_t$.

\subsection{A formula for $\Sigma$}\label{sec:formula-sigma}
Substituting \eqref{eq:formula-chi}
with $u = t n / s$
into \eqref{eq:Sigma-averaged-S-T}
gives
$\Sigma = \mathcal{F} - \eps(\overline{\chi })^{-1} \mathcal{O}$,
where
\[
\mathcal{F} =
q^{1/2}
  \sum _{r ,s,t}
  \alpha_r \beta_s \gamma_t
  \sum_n
  \frac{V (n/N)}{N}
  \lambda(1,n)
  e_q (\frac{t n}{r s}),
\]
\[
\mathcal{O} = \sum_n
\frac{V (n/N)}{N}
\lambda(1,n)
\sum _{s,t } \beta_s \gamma_t
\sum _{h \neq 0} \hat{W} (\frac{h}{H})
\frac{S_\chi(h,t n/s;q)}{\sqrt{q}}.
\]

\subsection{Main estimates}
We prove these in the next two sections.
\begin{proposition}\label{prop:F}
  Assume that
  \begin{equation}\label{eqn:cond-for-recip}
    q R S \succ T N.
  \end{equation}
  Then
  \begin{equation}\label{eqn:claimed-bound-F}
    |\mathcal{F}|^2 \prec
    \frac{q}{N}
    \left( \frac{q R S}{T N} \right)^3
    + 
    q \frac{(R S)^3 }{N^2}
    \left( \frac{1}{S T}
      + \frac{1 + N/R^2 S}{R^{1/2} S}
    \right).
  \end{equation}
\end{proposition}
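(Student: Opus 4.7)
The plan is to execute the standard chain reciprocity $\to$ Voronoi $\to$ Cauchy--Schwarz $\to$ Weil, where each step is forced by the structure of the sum $\mathcal{F}$ and the size hypothesis \eqref{eqn:cond-for-recip}.

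First, I would apply reciprocity to the exponential: since $(rs, q) = 1$,
\[
e_q\!\left(\frac{tn}{rs}\right) = e\!\left(\frac{tn}{qrs}\right)\, e_{rs}\!\left(-\frac{tn}{q}\right),
\]
where $1/q$ on the right denotes inversion modulo $rs$. The hypothesis \eqref{eqn:cond-for-recip} gives $|tn/(qrs)| \prec 1$ on the support of the integrand, so the archimedean factor $e(tn/(qrs))$ is slowly varying and can be absorbed together with $V(n/N)$ into an $(r,s,t)$-dependent inert weight $V_{r,s,t}$. The purpose of this step is to replace the modulus $q$ (too large relative to $N$ to dualize effectively) in the exponential by the much smaller modulus $rs$.

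Second, I would apply the $m=1$ Voronoi formula of \S\ref{sec:voronoi} to the resulting $n$-sum, with modulus $c = rs$ and parameter $a \equiv -t\bar{q} \pmod{rs}$. The dual sum has effective length $M/d^2$ with $M := (RS)^3/N$ and $d \mid rs$; using the rapid decay \eqref{eq:voronoi-transform-bound} and \eqref{eq:rankin-selberg-2} to reduce to the main case $d = 1$, I obtain, schematically,
\[
\mathcal{F} \prec \frac{q^{1/2}(RS)^{3/2}}{N} \sum_{r,s,t,n} \alpha_r \beta_s \gamma_t\, \Phi_{r,s,t}\!\left(\tfrac{n}{M}\right) \frac{\lambda(n,1)}{n}\, K_{rs}(\mp nq\bar{t}),
\]
with $\Phi$ inert in the $n$-variable and $\bar{t}$ the inverse of $t$ modulo $rs$. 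Then Cauchy--Schwarz, pulling out $(r, s, n)$ (weighted trivially for $\alpha, \beta$ and with \eqref{eq:rankin-selberg} controlling $\sum_n |\lambda(n,1)|^2$) and leaving the $t$-sum inside, reduces matters to the correlation
\[
\mathcal{C}(t, t') := \sum_n \Phi\!\left(\tfrac{n}{M}\right) K_{rs}(\mp nq\bar{t})\, \overline{K_{rs}(\mp nq \bar{t'})}.
\]

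Finally, $\mathcal{C}(t, t')$ is evaluated by completing the $n$-sum to modulus $rs$ via Poisson. I expect the diagonal contribution $t = t'$, combined with the trivial Weil bound on $|K_{rs}|$, to produce the $\frac{1}{ST}$ piece of \eqref{eqn:claimed-bound-F}; the zero-frequency Poisson mode on the off-diagonal to produce the first summand $\frac{q}{N}(\frac{qRS}{TN})^3$; and the non-zero Poisson frequencies, controlled by the Weil bound applied to the shifted Kloosterman sums that arise, to produce the $\frac{1 + N/R^2S}{R^{1/2} S}$ piece (the factor $1 + N/R^2S \asymp 1 + M/RS$ reflecting the two regimes of the dual length against the modulus). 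The main obstacle is precisely this last step: the careful bookkeeping of modular inverses ($\bar{q}, \bar{t}, \bar{t'} \bmod rs$), the correct accounting of gcd factors in the Weil estimate on the shifted Kloosterman sum that arises from the non-zero Poisson frequencies, and the matching of each sub-contribution to the exact shape of \eqref{eqn:claimed-bound-F}; everything else in the chain uses only routine manipulations.
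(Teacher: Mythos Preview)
Your chain reciprocity $\to$ Voronoi $\to$ Cauchy--Schwarz $\to$ Weil is exactly right, but the Cauchy--Schwarz step as you describe it is too weak to reach \eqref{eqn:claimed-bound-F}. You pull $(r,s,n)$ outside and leave only the $t$-sum inside; the paper instead pulls $(r,n,d)$ outside and leaves \emph{both} $s$ and $t$ inside. This is not a cosmetic difference. With only $t$ inside, the two Kloosterman sums in your correlation $\mathcal{C}(t,t')$ share the same modulus $rs$, so the diagonal $t=t'$ contributes a saving of only $1/T$ and the off-diagonal Weil bound gives at best $(rs)^{-1/2}\asymp (RS)^{-1/2}$. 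With $s$ also inside, the Kloosterman moduli become $rs_1$ and $rs_2$; for generic $s_1\neq s_2$ the relevant modulus is $[rs_1,rs_2]\asymp RS^2$, the diagonal condition becomes $s_1^2t_1=s_2^2t_2$ (density $\prec 1/(ST)$), and the off-diagonal bound improves to $\asymp 1/(R^{1/2}S)$. That extra $\sqrt{S}$ in each place is exactly what turns your expected $1/T$ into the required $1/(ST)$ and your $1/(RS)^{1/2}$ into $1/(R^{1/2}S)$. Without it you do not recover \eqref{eqn:claimed-bound-F}, and with the paper's eventual choices $S=q^{2/18}$, $T=q^{5/18}$ the loss is fatal for subconvexity.

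A secondary point: the first summand $\tfrac{q}{N}\bigl(\tfrac{qRS}{TN}\bigr)^3$ does not arise from the zero Poisson frequency on the off-diagonal. It comes from the degenerate case $(rs,t)>1$ (so $t\mid rs$, since $t$ is prime), where the Voronoi modulus drops to $rs/t$ and a crude bound of the type \eqref{eqn:triv-Srst} already suffices. Your sketch never isolates this case; since you write $\bar t\pmod{rs}$, you are implicitly assuming $(rs,t)=1$ throughout.
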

\begin{remark*}
  As explained in the remark of
  \S\ref{sec:F-est-disc-some-noise}, the first term on the RHS
  of \eqref{eqn:claimed-bound-F} is unnecessary.
  Including it simplifies slightly our proofs without affecting our final estimates.
\end{remark*}
\begin{proposition}\label{prop:O}
  Assume that
  \begin{equation}\label{eq:ST-smaller-R}
    S T \leq q^{-\eps} R
  \end{equation}
  for some fixed $\eps > 0$.
  Then
  \begin{equation}\label{eq:bound-for-O}
    |\mathcal{O}|^2 \prec
    H^2 \frac{1}{S T H}.
  \end{equation}
\end{proposition}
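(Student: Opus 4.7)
The plan is to apply Cauchy--Schwarz in $n$ (to eliminate $\lambda(1,n)$ via the Rankin--Selberg bound \eqref{eq:rankin-selberg}), open the resulting square, and apply Poisson summation to the inner $n$-sum to detect a congruence constraint that forces a near-collision of the two Kloosterman sum arguments. By Cauchy--Schwarz combined with \eqref{eq:rankin-selberg},
\[
|\mathcal{O}|^2 \prec \frac{1}{N} \sum_n V(n/N) |B_n|^2,
\qquad
B_n := \sum_{s,t,h \neq 0} \beta_s \gamma_t \hat{W}(h/H) \frac{S_\chi(h, tn/s; q)}{\sqrt{q}},
\]
so it suffices to establish $\sum_n V(n/N) |B_n|^2 \prec NH/(ST)$.

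Opening $|B_n|^2$, expanding each twisted Kloosterman sum as $S_\chi(h,u;q) = \sum_{r(q)^*} \chi(r) e_q(hr + u/r)$, and applying Poisson to the inner $n$-sum, the latter becomes $N \hat{V}(-N\phi/q) + O(q^{-A})$, where $\phi := t_1/(s_1 r_1) - t_2/(s_2 r_2) \in \mathbb{Z}/q$ (represented in $(-q/2,q/2]$); the rapid decay of $\hat{V}$ effectively restricts to $|\phi| \prec q/N$. The substitution $a := t_1/(s_1 r_1)$, $b := t_2/(s_2 r_2)$ converts the $(r_1,r_2)$-sum into $\chi(t_1 s_2/(s_1 t_2)) K(A,B)$ with $A := h_1 t_1/s_1$, $B := h_2 t_2/s_2$ in $\mathbb{Z}/q$ and
\[
K(A,B) := \sum_{a,b(q)^*} \bar{\chi}(a) \chi(b) e_q\!\bigl(\tfrac{A}{a} - \tfrac{B}{b}\bigr) \hat{V}\!\bigl(-\tfrac{N(a-b)}{q}\bigr).
\]

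Setting $\delta := b - a$ (with $|\delta| \prec q/N$ effectively), the $\delta = 0$ contribution is $\sum_a e_q((A-B)/a) = q \cdot 1_{A \equiv B \mod q} + O(1)$. For $\delta \neq 0$, the successive substitutions $a \mapsto \delta a$ and $a \mapsto 1/a$ transform the $a$-sum into $e_q(-(A+B)/\delta) \cdot S_\chi(A/\delta, B/\delta; q)$, which is $O(\sqrt{q})$ by Deligne (since $A, B \not\equiv 0 \mod q$). Summing over the $\prec q/N$ relevant values of $\delta$ gives $|K(A,B)| \prec q \cdot 1_{A \equiv B \mod q} + q^{3/2}/N$.

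Finally, the hypothesis $ST \leq q^{-\eps} R$ ensures $HST \leq q^{1-\eps} \ll q$, so the congruence $A \equiv B \mod q$ reduces to the integer equation $h_1 t_1 s_2 = h_2 t_2 s_1$, which admits $\prec STH$ solutions in $s_i \in [S,2S]$, $t_i \in [T,2T]$, $|h_i| \prec H$ (standard gcd analysis). The diagonal contribution to $\sum_n V(n/N) |B_n|^2$ is therefore $(N/q) \cdot q \cdot STH \cdot (ST)^{-2} = NH/(ST)$, while the off-diagonal contribution is $\prec q^{1/2} H^2$, which is dominated by $NH/(ST)$ under \eqref{eq:ST-smaller-R} combined with the range \eqref{eq:ranges-for-N} of $N$. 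Dividing by $N$ yields $|\mathcal{O}|^2 \prec H/(ST) = H^2/(STH)$. The main technical points are the identification of the $\delta \neq 0$ inner sum as a twisted Kloosterman sum (so that Deligne applies) and the verification that the off-diagonal contribution is absorbed by the $\prec$ notation, which is where the hypothesis \eqref{eq:ST-smaller-R} is essential.
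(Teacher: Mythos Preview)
Your approach is essentially the paper's: Cauchy--Schwarz in $n$ with Rankin--Selberg, open the square, Poisson in $n$, then an elementary exponential-sum bound and a divisor-bound count of the diagonal. The only structural difference is the order of operations. The paper applies Poisson \emph{before} opening the twisted Kloosterman sums: since $N \gg q^{1+\eps'}$ (from \eqref{eq:ranges-for-N} and $\delta_0 < 1/4$), only the zero Poisson frequency survives, leaving the complete sum $\Pi_0 = q^{-1}\sum_{n(q)} S_\chi(h_1,\cdot)\overline{S_\chi(h_2,\cdot)}$; opening and summing over $n$ forces $t_1/(s_1 x) \equiv t_2/(s_2 y)$, and what remains is a single Ramanujan-type sum bounded by $(t_1 s_2 h_2 - t_2 s_1 h_1, q)$. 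Your order (open first, then Poisson) introduces the shift $\delta = b - a$; your $\delta = 0$ term \emph{is} the paper's $\Pi_0$, and your identification of the $\delta \neq 0$ inner sum as $e_q(-(A+B)/\delta)\,S_\chi(A/\delta,B/\delta;q)$ via the substitution $x = (a+\delta)/a$ is correct but unnecessary, since for $N \gg q$ the weight $\hat{V}(N\delta/q)$ already makes every $\delta \neq 0$ term $O(q^{-\infty})$.

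This last point is also where your argument has a small gap. Your bound $q^{3/2}/N$ on the off-diagonal part of $K(A,B)$ (coming from ``$\prec q/N$ relevant $\delta$'' times $\sqrt{q}$) gives an off-diagonal contribution $\prec q^{1/2} H^2$ to $\sum_n |V(n/N)|\,|B_n|^2$, and this is dominated by $N H/(S T)$ only when $q^{3/2} S T / R \prec N$, i.e.\ when the $\eps$ in \eqref{eq:ST-smaller-R} satisfies $\eps \geq 2\delta_0 = 1/18$. For smaller $\eps$ the proposition as stated is not recovered. The fix is immediate: since $N \gg q$, the $\delta \neq 0$ terms are $O(q^{-\infty})$, so the true off-diagonal bound on $K(A,B)$ is $O(1)$ (from the $\delta = 0$, $A \not\equiv B$ Ramanujan sum), giving an off-diagonal contribution $\prec H^2/q$ to $|\mathcal{O}|^2$; this is $\prec H^2/(S T H)$ directly from \eqref{eq:ST-smaller-R} since $S T H = q S T / R \leq q^{1-\eps}$. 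You should also truncate the $h$-sum to $|h| \leq q^{\eps} H$ before opening (as the paper does), to guarantee $(h_i,q) = 1$ and hence $A, B \in (\mathbb{Z}/q)^*$.
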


\subsection{Optimization}
Our goal reduces to establishing
that
$\mathcal{F}, \mathcal{O}  \prec N^{-1/2} q^{3/4 - \delta_0}$.
(By comparison, we note
the trivial bounds
$\mathcal{F} \prec q^{1/2}$ and
$\mathcal{O} \prec H$.)
We achieve this
by applying the above estimates with
\[
R :=
\frac{T N }{q S},
\quad
S := q^{2/18},
\quad
T := q^{5/18}.
\]
Then \eqref{eqn:cond-for-recip}
is clear, while \eqref{eq:ST-smaller-R}
follows from \eqref{eq:ranges-for-N}.
The required bound for $\mathcal{O}$
follows readily from \eqref{eq:bound-for-O}.
We now deduce the required bound for $\mathcal{F}$.
Note 
that the first term on the RHS
of \eqref{eqn:claimed-bound-F}
is acceptable thanks to our choice of $R$.
Note also
from \eqref{eq:ranges-for-N}
that $q S T \leq N \prec q^{3/2}$;
from our choice of $R$, it follows that
$1/S T \gg (1 + N /R^2 S)/ R^{1/2} S$.
The bound
for $|\mathcal{F}|^2$
then readily simplifies to
$|\mathcal{F}|^2
\prec q^{-2 \delta_0}
\prec N^{-1}  q^{3/2 - 2 \delta_0}$.
(By solving a linear programming problem,
we see moreover that these choices give the optimal bound
for $L(\pi \otimes \chi, 1/2)$
derivable from the above propositions.)

\section{Estimates for $\mathcal{F}$}
\label{sec-3-5}
\label{sec:F-est}
We now prove Proposition \ref{prop:F}.

\subsection{Reciprocity}
\label{sec-3-5-1}
Our assumption \eqref{eqn:cond-for-recip} implies that
for
all $r,s,t$ with $\alpha_r \beta_s \gamma_t \neq 0$,
the function
$V_{r,s,t}' (x)
:=
V(x)
e(t N x/q r s)$
is inert.
By the Chinese remainder theorem,
we have
$e_q (t n /r s)
=
e_{q r s}
(t n)
e_{r s}(- {t n}/{q})$ for $(r s, q) = 1$.
We may thus rewrite
\[
\mathcal{F} = 
\sum _{r ,s,t}
\alpha_r \beta_s \gamma_t
\mathcal{S}(r,s,t),
\]
where
\[
\mathcal{S}(r,s,t)
:=
q^{1/2}
\sum_n
\frac{V_{r,s,t}' (n/N)}{N}
\lambda(1,n)
e_{r s}(- \frac{t n}{q}).
\]
\subsection{Voronoi}
We introduce the notation
\[
c := c(r s, t) := \frac{r s}{(r s, t)},
\quad 
a := a(r s, t) :=
\frac{-t}{(r s, t)},
\]
so that
$e_{r s} (- t n/q)
=
e_c (a n/q)$ and $(a,c) = 1$.
Applying Voronoi summation
(\S\ref{sec:voronoi}),
we obtain
\[
\mathcal{S}(r,s,t)
=
\frac{ q^{1/2} c^{3/2}  }{N}
\sum _{\substack{\pm, n \\ d \mid c}}
V_{\pm, r,s,t}'' \left( \frac{n d^2}{c^3/N} \right)
\frac{\lambda(n,d)}{n d^{3/2}}
K_{c/d}(\frac{\pm q n}{a})
\]
for some smooth functions
$V_{\pm, r,s,t}''$ 
satisfying
$(x \partial_x)^j V_{\pm, r,s,t}''(x)
\prec
\min(x^{1-\theta}, x^{-A})$
for fixed $j, A \in \mathbb{Z}_{\geq 0}$.

\subsection{Cleaning up}\label{sec:F-est-disc-some-noise}
The Weil bound,
the Rankin--Selberg bound
\eqref{eq:rankin-selberg}
and the condition $N \prec q^{3/2}$ give
\begin{equation}\label{eqn:triv-Srst}
  \mathcal{S}(r,s,t) \prec q^{1/2} c^{3/2}/N \prec N^{-1/2} q^{1/2} (q c / N)^{3/2}.
\end{equation}
If $(r s, t) \neq 1$,
then (because $t$ is prime)
$c = r s / t$,
hence by \eqref{eqn:triv-Srst},
\[
\sum_{r,s,t: (r s, t) \neq 1} \alpha_r \beta_s \gamma_t
\mathcal{S}(r,s,t)
\prec
N^{-1/2} q^{1/2}
\left( \frac{q R S }{T N} \right)^{3/2}.
\]
Since
the square of the latter is the first term
on the RHS of \eqref{eqn:claimed-bound-F},
the proof of Proposition \ref{prop:F}
reduces to
that of an adequate bound for
the sum
\[\mathcal{F}_1 := \sum_{r,s,t : (r s, t) = 1} \alpha_r
\beta_s \gamma_t \mathcal{S}(r,s,t).
\]
If $(rs , t) = 1$,
then $c = r s$ and $a = -t$,
hence
\begin{equation}\label{eqn:F1-via-Phi}
  \mathcal{F}_1 = 
  \sum_{\substack{
      \pm, n, r, d \\
    }
  }
  \alpha_r
  \frac{\lambda(n,d)}{\sqrt{n d}}
  \sum _{\substack{
      s, t : \\
      d | r s, (r s, t) = 1
    }
  }
  \beta_s \gamma_t
  \Phi(n,d,r,s,t)
\end{equation}
with
\[
\Phi(n,d,r,s,t)
:=
\frac{q^{1/2} (r s)^{3/2}}{N \sqrt{n} d}
V_{\pm, r,s,t}'' \left( \frac{n d^2}{(r s)^3/N} \right)
K_{ r s / d}( \frac{\mp q n }{ t}).
\]

\begin{remark*}
  With slightly more case-by-case analysis in the arguments to follow,
  one can verify that the reduction performed here to the case
  $(rs , t) = 1$ is unnecessary,
  hence that the bound \eqref{eqn:claimed-bound-F}
  remains valid in the stated generality
  even after deleting the first term on its RHS.
\end{remark*}

\subsection{Cauchy--Schwarz}\label{sec:F-CS}
Let $\eps >0$ be fixed and small.
The rapid decay of $V_{\pm,r,s,t}''$
implies that truncating \eqref{eqn:F1-via-Phi}
to $n d^2 \leq q^\eps (R S)^3/N$
introduces the negligible error $\O(q^{-\infty})$.
By the Rankin--Selberg bound \eqref{eq:rankin-selberg-2},
we have
\[
\sum_{\substack{
    \pm, n, r, d: \\
    n d^2 \leq q^\eps (R S)^3/N
  }
}
|\alpha_r|
\frac{|\lambda(n,d)|^2}{n d}
\prec 1.
\]
It follows by Cauchy--Schwarz
that
\[
|\mathcal{F}_1|^2
\prec
\sum_{\substack{
    \pm, n, r, d
  }
}
|\alpha_r|
\left\lvert
  \sum _{\substack{
      s, t : \\
      d | r s, (r s, t) = 1
    }
  }
  \beta_s \gamma_t
  \Phi(n,d,r,s,t)
\right\rvert^2 + \O(q^{-\infty}).
\]

\subsection{Application of exponential sum bounds}
Opening the square,
expanding the definition of $\Phi$
and wastefully discarding some summation conditions,
we obtain
\begin{equation}\label{eqn:bound-F1-via-C}
  |\mathcal{F}_1|^2
  \prec
  \frac{q (R S)^3}{N^2}
  \sum_{\substack{
      \pm,r,d,s_1,s_2,t_1,t_2: \\
      d|(r s_1,r s_2)
    }
  }
  \frac{|\alpha_r \beta_{s_1} \beta_{s_2} \gamma_{t_1}
    \gamma_{t_2}|}{d^2}
  |\mathcal{C}| + \O(q^{-\infty}),
\end{equation}
where $\mathcal{C}$ is defined
for $(r,s_1,s_2,t_1,t_2)$ in the support of
$\alpha_r \beta_{s_1} \beta_{s_2} \gamma_{t_1} \gamma_{t_2}$
by
\begin{equation}\label{eq:formula-fro-C}
\mathcal{C}
:=
\frac{1}{X}
\sum_n
U (\frac{n}{X})
K_{r s_1/d}
(\frac{\mp q n}{t_1})
\overline{
  K_{r s_2/d}
  (\frac{\mp q n}{t_2})
}
\end{equation}
with
\[
X := \frac{(r s_1)^{3/2} (r s_2)^{3/2}}{d^2 N} \asymp \frac{(R S)^3}{d^2 N}
\]
and
\[
U(x) :=
\frac{1}{x}
V_{\pm,r,s_1,t_1}''\left( \frac{X x d^2}{(r s_1)^3/N} \right)
\overline{
  V_{\pm,r,s_2,t_2}''\left( \frac{X x d^2}{(r s_2)^3/N} \right)
}.
\]
We have
$(x \partial_x)^j U(x)
\prec
\min(x^{1 - 2 \theta }, x^{-A})$ for fixed $j, A \in \mathbb{Z}_{\geq 0}$.
By a smooth dyadic partition of unity,
we may write
\begin{equation}\label{eq:part-unity-W-dotsb}
  U(x)
  = \sum_{Y \in \exp(\mathbb{Z})}
  \min(Y^{1-2\theta}, Y^{-10}) U_Y(\frac{x}{Y}),
\end{equation}
where each function $U_Y$ is inert.
Substituting \eqref{eq:part-unity-W-dotsb}
into \eqref{eq:formula-fro-C}
and applying the incomplete exponential sum estimates
recorded in Appendix \ref{sec:corr-kloost},
we obtain
with
\[
\Delta :=
q
\frac{(r s_2/d)^2 t_2 - (r s_1/d)^2 t_1}{(r s_1/d,r s_2/d)^2}
=
q
\frac{s_2^2 t_2 - s_1^2 t_1}{(s_1,s_2)^2}
\]
that
\[
\mathcal{C} 
\prec
\frac{1}{X}
\sum_{Y \in \exp(\mathbb{Z})}
\min(Y^{1-2 \theta}, Y^{-10})
\left(
  X Y
  \frac{(\Delta, \tfrac{r s_1}{d}, \tfrac{r s_2}{d})^{1/2}}{[\tfrac{r s_1}{d},\tfrac{r s_2}{d}]^{1/2}}
  + [\tfrac{r s_1}{d},\tfrac{r s_2}{d}]^{1/2}
\right).
\]
Since $\theta < 1/2$, the above sum is dominated
by the contribution from $Y = 1$;
estimating that contribution a bit crudely with respect to $d$,
we obtain
\begin{equation}\label{eqn:calC-final-bound}
  \mathcal{C}  \prec d^{1/2} 
\frac{(\Delta, r s_1, r s_2)^{1/2}}{r^{1/2} [s_1,s_2]^{1/2}}
+
\frac{r^{1/2} [s_1,s_2]^{1/2}}{d^{1/2} X}.
\end{equation}
\subsection{Diagonal and off-diagonal}
To state the
estimates to be obtained
shortly,
we introduce
the notation
\[
\mathbb{E}_{r,s_1,s_2,t_1,t_2} :=
\frac{1}{R S^2 T^2}
\sum_{\substack{
    r :
    \\
    R \leq s \leq 2 R
  }
}
\sum_{\substack{
    s_1, s_2: \\
    S \leq s_1,s_2 \leq 2 S
  }
}
\sum_{\substack{
    t_1, t_2: \\
    T \leq t_1,t_2 \leq 2 T
  }
}.
\]
We estimate separately the contribution of each term on the RHS
of \eqref{eqn:calC-final-bound} to $\mathcal{F}_1$ via
\eqref{eqn:bound-F1-via-C}, splitting off the contribution to
the first from terms with $\Delta = 0$.
We obtain in this way that
\[|\mathcal{F}_1|^2 \prec
\frac{q (R S)^3}{N^2}
\sum_{i=0,1,2} \mathcal{B}_i +
\O(q^{-\infty}),\]
where
\begin{align*}
  \mathcal{B}_0 &:=
                  \mathbb{E}_{r,s_1,s_2,t_1,t_2}
                  1_{\Delta = 0}
                  \frac{(s_1, s_2)^{1/2}}{[s_1,s_2]^{1/2}},
  \\
  \mathcal{B}_1 &:=
                  \mathbb{E}_{r,s_1,s_2,t_1,t_2}
                  1_{\Delta \neq 0}
                  \frac{(\Delta , r s_1, r s_2)^{1/2}}{r^{1/2} [s_1,s_2]^{1/2}},
  \\
  \mathcal{B}_2 &:=
                  \frac{N}{(R S)^3}
                  \mathbb{E}_{r,s_1,s_2,t_1,t_2}
                  r^{1/2} [s_1,s_2]^{1/2}.
\end{align*}
(In deriving
the estimate involving $\mathcal{B}_2$,
we used the slightly wasteful
bound
$\frac{1}{d^2} \frac{1}{d^{1/2} X}
\ll \frac{N}{ (R S)^3}$.)
Noting that $\Delta = 0$ iff $s_2^2 t_2 = s_1^2 t_1$,
we verify
using the divisor bound
that
\begin{align*}
  \mathcal{B}_0
  &\prec
    \frac{1}{ S T}, \\
  \mathcal{B}_1 &\prec
                  \frac{1}{R^{1/2} S}, \\
  \mathcal{B}_2
  &\prec
    \frac{N}{(R S)^3}
    R^{1/2} S.
\end{align*}
These estimates combine to give an adequate estimate
for $\mathcal{F}_1$.

\section{Estimates for $\mathcal{O}$}
\label{sec-3-6}
\label{sec:O-est}
We now prove Proposition \ref{prop:O}.
\subsection{Cauchy--Schwarz}
Using again the Rankin--Selberg bound \eqref{eq:rankin-selberg},
we obtain
\[
|\mathcal{O}|^2
\prec
\sum_{n}
\frac{|V(n/N)|^2}{N}
\left\lvert
  \sum_{s, t, h : h \neq 0}
  \beta_s \gamma_t \hat{W} (\frac{h}{H})
  \frac{S_\chi(h, t n/s; q)}{\sqrt{q}}
\right\rvert^2.
\]
\subsection{Elementary exponential sum bounds}
Let $\eps >0$ be fixed but sufficiently small.
Since $q$ is prime
and $R$ satisfies the  lower bound in
\eqref{eq:RST-basic-bounds},
we know that the integers $h$ and $q$
are coprime whenever
$0 \neq |h| \leq q^\eps H$.
By the rapid decay of $\hat{W}$,
we may truncate the $h$-sum
to $|h| \leq q^\eps H$ with negligible
error $\O(q^{-\infty})$.
We then open the square and apply Cauchy--Schwarz,
leading us to  consider
for $s_1,t_1,h_1,s_2,t_2,h_2$
with
\begin{equation}\label{eqn:sth-conditions}
  S \leq s_i \leq 2 S,
  \quad 
  T \leq t_i \leq 2 T,
  \quad 
  0 \neq |h_i| \leq q^\eps H
\end{equation}
the sums 
\begin{equation}\label{eq:}
  \Pi :=
  \sum_n
  \frac{|V(n/N)|^2}{N}
  \frac{S_\chi(h_1, t_1 n/s_1; q)}{\sqrt{q}}
  \overline{\frac{S_\chi(h_2, t_2 n/s_2; q)}{\sqrt{q}}}.
\end{equation}
We apply Poisson summation.
By the lower bound on $N$ in \eqref{eq:ranges-for-N}
and the assumption $\delta_0  = 1/36 < 1/4$,
we have $N \gg q^{1+\eps}$ for some fixed $\eps > 0$.
Thus only the zero frequency $\xi = 0$
after Poisson
contributes non-negligibly,
and so
$\Pi
\prec
q^{-1} \Pi_0 + \O(q^{-\infty})$
with
\[\Pi_0
:=
\sum_{n (q)}
\frac{S_\chi(h_1, t_1 n/s_1; q)}{\sqrt{q}}
\overline{\frac{S_\chi(h_2, t_2 n/s_2; q)}{\sqrt{q}}}.
\]
Opening the Kloosterman sums
and executing the $n$-sum
gives
\[
\Pi_0 = \sum_{x,y(q)^*}
1 _{t_1 / s_1 x = t_2/s_2 y}
\chi(x/y)
e_q(h_1 x - h_2 y).
\]
Our assumptions imply that
the quantities $s_i, t_i, h_i$
are all coprime to $q$,
so after a change of variables
we arrive
at
\[
|\Pi_0|
=
|\sum_{x(q)^*} e_q((s_1 t_2 h_1 - s_2 t_1 h_2) x)|
\leq
(t_1 s_2 h_2 - t_2 s_1 h_1, q).
\]
\subsection{Diagonal vs. off-diagonal}
We have shown thus far that
\[
|\mathcal{O}|^2 \prec
H^2
\frac{1}{(S T H)^2}
\sum_{s_1,t_1,h_1,s_2,t_2,h_2}
q^{-1}
(t_1 s_2 h_2 - t_2 s_1 h_1, q)
+
\O(q^{-\infty}),
\]
where the sum is restricted
by the condition \eqref{eqn:sth-conditions}.
By our assumption \eqref{eq:ST-smaller-R},
the quantities $t_1 s_2 h_2$ and $t_2 s_1 h_1$
are congruent modulo $q$ precisely when they are equal.
By the divisor bound, the number of tuples
for which $t_1 s_2 h_2 = t_2 s_1 h_1$
is
$\prec q^{2 \eps } S T H$.
Since $\eps > 0$ was arbitrary,
we obtain
\begin{equation}\label{eq:}
  |\mathcal{O}|^2
  \prec
  H^2
  \left( \frac{1}{S T H}
    + \frac{1}{q}
  \right).
\end{equation}
By another application of our assumption
\eqref{eq:ST-smaller-R},
the first term in the latter bound dominates,
giving the required bound for $\mathcal{O}$.

The proof of our main result (Theorem \ref{thm:main})
is now complete.

\appendix
\section{Correlations of Kloosterman sums\label{sec:corr-kloost}}
\label{sec-2-4-1}
The estimates recorded here are unsurprising, but we were unable
to find references containing all cases that we require
(compare with e.g. \cite{MR3294387, MR3377053,
  2016arXiv160408000M}).
\begin{lemma}
  Let $s$ be a natural number.
  Let $a,b,c,d \in \mathbb{Z}/s$
  be congruence classes
  for which
  $(d,s) = 1$.
  For each prime $p \mid s$,
  let $\mathcal{X}_0(p) \subseteq \mathbb{Z}/p$
  be a subset of cardinality $p - \O(1)$.
  Let $\mathcal{X}$ denote the set of elements
  $x \in \mathbb{Z}/s$
  for which
  \begin{itemize}
  \item the class of $x$ modulo $p$ belongs
    to $\mathcal{X}_0(p)$ for each $p \mid s$, and
  \item $(c x + d,s) = 1$.
  \end{itemize}
  Define
  $\phi : \mathcal{X} \rightarrow \mathbb{Z}/s$
  by
  \[
    \phi(x) := x \frac{a x + b}{c x + d}.
  \]
  Then the exponential sum
  $\Sigma := s^{-1} \sum_{x \in \mathcal{X}} e_s(\phi(x))$
  satisfies
  \[
    |\Sigma|
    \leq
    2^{\O(\omega(s))}
    \frac{(a,b,s)}{s^{1/2} (a,s)^{1/2}},
    \]
    where $\omega(s)$ denotes the number of prime divisors
    of $s$, without multiplicity.
\end{lemma}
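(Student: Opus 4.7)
The proof strategy is multiplicative reduction followed by case analysis. By the Chinese remainder theorem, writing $s = \prod_{p \mid s} p^{k_p}$ induces a factorization $\mathcal{X} = \prod_p \mathcal{X}_p$ and $\Sigma = \prod_p \Sigma_p$, where $\Sigma_p$ is the analogous normalized sum modulo $p^{k_p}$. The claimed bound is multiplicative, with the factor $2^{\O(\omega(s))}$ absorbing a bounded loss per prime. It thus suffices to establish the estimate uniformly for a single prime power $s = p^k$.

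For $s = p$ prime I would split on the $p$-adic valuations of $a$ and $b$. When $p \nmid a$, the function $\phi(x) = (ax^2 + bx)/(cx+d)$ is a rational function of degree $2$ on $\mathbb{P}^1_{\mathbb{F}_p}$ that is not of the additive form $h^p - h + \text{const}$ (by degree for $p \geq 3$, direct check for $p = 2$), so the Weil--Bombieri bound yields $|\Sigma| \ll p^{-1/2}$, matching $(a,b,p)/(p^{1/2}(a,p)^{1/2}) = p^{-1/2}$. When $p \mid a$ but $p \nmid b$, the function reduces mod $p$ to the M\"obius form $bx/(cx+d)$; the change of variable $y = \phi(x)$ converts the sum into $\sum_{y \in Y} e_p(y)$ for a cofinite $Y \subset \mathbb{Z}/p$, which is $\O(1)$, giving $|\Sigma| \ll p^{-1}$. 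When $p \mid (a,b)$ the phase $\phi$ vanishes identically modulo $p$, and the trivial bound $|\Sigma| \leq 1$ matches $(a,b,p)/(p^{1/2}(a,p)^{1/2}) = 1$.

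For prime powers $s = p^k$ with $k \geq 2$ I would apply the $p$-adic stationary phase method. Writing $x = x_0 + p^m y$ with $m$ close to $k/2$ and Taylor expanding $\phi$ modulo $p^k$ reduces the $y$-sum to a condition on the formal derivative $\phi'(x) = (acx^2 + 2adx + bd)/(cx+d)^2$. Since $(cx+d,p) = 1$ on $\mathcal{X}$, the stationary condition becomes $Q(x_0) := acx_0^2 + 2adx_0 + bd \equiv 0$ modulo an appropriate prime power. One then counts solutions via Hensel's lemma, tracking how $v_p(a)$, $v_p(b)$, and the valuation of the discriminant $4ad(ad - bc)$ control the number of lifts, and assembles the contributions so as to recover the arithmetic factor $(a,b,p^k)/(p^{k/2}(a,p^k)^{1/2})$ in each valuation regime (the three regimes above for $p$ lifting to three analogous regimes depending on whether $v_p(a) = 0$, $0 < v_p(a) \leq k \leq v_p(b)$ fails, or $v_p(a), v_p(b) \geq k$).

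The main obstacles will be (i) the case $p = 2$, where the factor of $1/2$ in the Taylor expansion forces a more delicate splitting and may require separately handling the Gauss-sum contribution from the quadratic term; (ii) odd $k$, where naive first-order stationary phase loses a factor of $p^{1/2}$ and one must exploit the second-order Gauss-sum cancellation to recover it; and (iii) organizing the counting of stationary points uniformly across the valuation strata of $a$ and $b$ so as to extract the precise numerator $(a,b,s)$ rather than a weaker substitute, which requires factoring out common valuations of $a$ and $b$ from $Q$ before applying Hensel.
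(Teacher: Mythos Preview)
Your overall architecture --- CRT reduction to prime powers, Weil/Ramanujan/trivial trichotomy for $s=p$, and $p$-adic stationary phase for $s=p^k$ with $k\geq 2$ --- matches the paper's. The difference lies in how the prime-power case is organized, and the paper's organization dissolves precisely the three obstacles you flag.

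The paper does not attempt to run stationary phase uniformly across all valuation strata of $a,b$. Instead it argues by \emph{induction on $k$}: if $p\mid (a,b)$, then $\phi(x)$ is divisible by $p$ in $\mathbb{Z}/p^k$, so $e_{p^k}(\phi(x)) = e_{p^{k-1}}\bigl(x\,\frac{(a/p)x+(b/p)}{cx+d}\bigr)$, and one recurses to modulus $p^{k-1}$ with parameters $a/p,b/p,c,d$. Iterating strips off $\min(v_p(a),v_p(b))$ copies of $p$ from $s$, and the factor $(a,b,s)$ in the bound comes out automatically from the induction rather than from any delicate counting. This is exactly the ``factoring out common valuations'' you mention in (iii), but performed at the level of the phase and the modulus, not at the level of the polynomial $Q$ inside stationary phase.

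After this reduction one has $(a,b,p)=1$, and now stationary phase is completely nondegenerate. The paper computes $\phi'(x)=(acx^2+2adx+bd)/(cx+d)^2$ and $\phi''(x)=(2a+2c\phi'(x))/(cx+d)$. If $p\mid a$ then $p\nmid b$, so $\phi'(x)\equiv bd/(cx+d)^2\not\equiv 0\pmod p$, the stationary set is empty, and $\Sigma=0$. If $p\nmid a$, then on the stationary set $\phi''\equiv 2a/(cx+d)$, so $(\phi'',p)=(2,p)\ll 1$; Hensel then gives $\O(1)$ stationary points and the standard stationary phase estimate from \cite[\S12.3]{MR2061214} yields $\Sigma\ll s^{-1/2}$ directly, including for odd $k$ and for $p=2$ (the bounded loss is absorbed into $2^{\O(\omega(s))}$). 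Thus obstacles (i) and (ii) evaporate once one is in the nondegenerate regime $(a,b,p)=1$.

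So your plan would work, but the inductive reduction is the missing idea that turns a careful multi-case valuation analysis into a two-line argument.
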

\begin{proof}
  We may assume that $s = p^n$ for some prime $p$.  For $n=0$,
  there is nothing to show.  For $n=1$, we appeal either to the
  Weil bound, to bounds for Ramanujan sums, or to the trivial
  bound according as $(a,p) = 1$, or $(a,p) = p$ and
  $(a,b,p) = 1$, or $(a,b,p) = p$.  We treat the remaining cases
  by induction on $n \geq 2$.
  If $(a,b,p) > 1$, then
  the conclusion follows by our inductive hypothesis applied to
  $s/p, a/p, b/p, c, d$.  We may thus assume that $(a,b,p) = 1$.
  A short calculation gives the identities
  of rational functions
  \begin{equation}\label{eq:2nd-deriv-test}
    \phi '(x)
    = \frac{a c x^2 + 2 a d x + b d}{(c x + d)^2},
    \quad 
    \phi ''(x)
    =  \frac{2 a + 2 c \phi '(x)}{c x + d}.
  \end{equation}
  Write $n = 2 \alpha$ or $2 \alpha + 1$,
  and set
  $\mathcal{R} := \{x \in \mathcal{X}/p^\alpha : \phi '(x)
  \equiv 0 \pod{p^\alpha}\}$.
  Then by $p$-adic stationary phase \cite[\S12.3]{MR2061214},
  \[
    \Sigma \ll
    s^{-1/2}
    \sum_{x \in \mathcal{R}}
    (\phi ''(x), p)^{1/2}.
  \]
  If $(a,p) > 1$, then
  $(b,p) = 1$
  and
  $\phi '(x) \equiv b d / (c x + d)^2 \pod{p}$,
  so $(\phi'(x), p) = 1$.
  Thus
  $\mathcal{R} = \emptyset$ and $\Sigma = 0$.  
  Assume otherwise
  that $(a,p) = 1$.
  For $x \in \mathcal{R}$,
  we have
  $\phi''(x) \equiv 2 a / (c x + d) \pod{p^\alpha}$,
  so that
  \begin{equation}\label{eq:2nd-deriv-woo}
    x \in \mathcal{R} \implies
    (\phi ''(x), p) = (2 a, p^\alpha) = (2,p^\alpha) \ll 1.
  \end{equation}
  Thus $\Sigma \ll s^{-1/2} \# \mathcal{R}$
  and, by Hensel's lemma,
  $\# \mathcal{R} \ll 1$.
  The proof of the required bound is then complete.
\end{proof}

\begin{lemma}
  Let $s_1,s_2$ be natural numbers.
  Let
  $a_1,a_2,b_1,b_2$ be integers with $(b_1,s_1) = (b_2,s_2) =
  1$.
  Set $\ell_i := a_i/b_i \in \mathbb{Z}/s_i$.
  Set
  \[
    \Delta :=
    \frac{s_2^2 b_2 a_1 - s_1^2 b_1 a_2}{(s_1,s_2)^2}.
  \]
  \begin{enumerate}[(i)]
  \item \label{item:complete-exp-sums}
    Let $\xi$ be an integer.
    Set
    \[
      \Sigma  :=
      \frac{1}{[s_1,s_2]}
      \sum _{x([s_1,s_2])}
      K_{s_1}(\ell_1 x)
      \overline{K_{s_2}(\ell_2 x)}
      e _{[s_1,s_2]}(\xi x)
    \]
    Then
    \begin{equation}\label{eq:stronger-corr-kl}
      |\Sigma|
      \leq 2^{\O(\omega([s_1,s_2]))}
      \frac{(\Delta,\xi,s_1,s_2)}{ [s_1,s_2]^{1/2} (\xi, s_1, s_2)^{1/2}}.
    \end{equation}
    In particular,
    \begin{equation}\label{eq:weaker-klooster-corr}
      |\Sigma|
      \leq  2^{\O(\omega([s_1,s_2]))}
      \frac{(\Delta,\xi,s_1,s_2)^{1/2}}{ [s_1,s_2]^{1/2}}.
    \end{equation}
  \item \label{item:incomplete-exp-sums}
    Let $V : \mathbb{R} \rightarrow \mathbb{C}$ be a smooth
    function
    satisfying
    $x^m \partial_x^n V(x) \prec 1$
    for all fixed $m,n \in \mathbb{Z}_{\geq 0}$.
    Let $X > 0$.
    Assume that $s_1, s_2 = \O(q^{\O(1)})$.
    Then
    \begin{equation}\label{eq:incompl-klooster-corr}
      \sum_{n}
      V (\frac{n}{X})
      K_{s_1}(\ell_1 n)
      \overline{K_{s_2}(\ell_2 n)}
      \prec
      X \frac{(\Delta, s_1, s_2)^{1/2}}{[s_1,s_2]^{1/2}}
      +
      [s_1,s_2]^{1/2}.
    \end{equation}
  \end{enumerate}
\end{lemma}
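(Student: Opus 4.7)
The plan is to prove the complete-sum bound in part~(i) by expanding both Kloosterman sums and executing the inner $x$-sum via orthogonality, thereby reducing to a one-variable exponential sum that can be bounded using the preceding lemma. Part~(ii) then follows from~(i) via Poisson summation.

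For (i), write $K_{s_i}(\ell_i x) = s_i^{-1/2} \sum_{t_i(s_i)^*} e_{s_i}(\ell_i x t_i + t_i^{-1})$ and interchange summations. With $m_i := [s_1,s_2]/s_i$, orthogonality of additive characters modulo $[s_1,s_2]$ collapses the $x$-sum to the indicator of the linear congruence $m_1 \ell_1 t_1 - m_2 \ell_2 t_2 + \xi \equiv 0 \pmod{[s_1,s_2]}$, yielding
\[
\Sigma = \frac{1}{(s_1 s_2)^{1/2}} \sum_{\substack{t_1 \in (\mathbb{Z}/s_1)^*,\, t_2 \in (\mathbb{Z}/s_2)^* \\ \text{(congruence)}}} e_{s_1}(t_1^{-1}) e_{s_2}(-t_2^{-1}).
\]
Twisted multiplicativity of Kloosterman sums allows prime-by-prime factorization, so we may assume $s_i = p^{n_i}$ with $n_1 \leq n_2$, in which case $m_2 = 1$, $m_1 = p^{n_2-n_1}$, and (generically) the congruence solves $t_2 \equiv (p^{n_2-n_1}\ell_1 t_1 + \xi)/\ell_2 \pmod{p^{n_2}}$; degenerate cases where $\ell_i$ or $\xi$ share common factors with the modulus are handled by induction on $n_1 + n_2$, as in the preceding lemma. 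Substituting the solution for $t_2$, combining the exponentials modulo $p^{n_2}$ and making the change of variables $t_1 \mapsto t_1^{-1}$ transforms the one-variable sum into one with phase of the form $t \mapsto t(at+b)/(ct+d)$, where the identity
\[
p^{2(n_2-n_1)}\ell_1 - \ell_2 \equiv \Delta \cdot (b_1 b_2)^{-1} \pmod{p^{n_2}}
\]
produces $(a,b,c,d) = (p^{n_2-n_1}\xi,\, \Delta(b_1 b_2)^{-1},\, \xi,\, p^{n_2-n_1}\ell_1)$. The preceding lemma then applies; matching $p$-adic valuations, so that $(a,s) = p^{n_2-n_1}(\xi, p^{n_1})$ and $(a,b,s) = \gcd(p^{n_2-n_1}\xi, \Delta, p^{n_2})$, and accounting for the factor $p^{n_1-n_2}$ that accompanies inflation of the sum from $(\mathbb{Z}/p^{n_1})^*$ to $(\mathbb{Z}/p^{n_2})^*$, recovers the claimed bound.

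For (ii), Poisson summation modulo $[s_1,s_2]$ gives
\[
\sum_n V(n/X) K_{s_1}(\ell_1 n) \overline{K_{s_2}(\ell_2 n)} = X \sum_\xi \hat V\left(\frac{X\xi}{[s_1,s_2]}\right) \Sigma(-\xi),
\]
with $\Sigma(\cdot)$ the complete sum from~(i). The rapid decay of $\hat V$ restricts effectively to $|\xi| \prec [s_1,s_2]/X$. The zero-frequency contribution is bounded by
\[
X|\Sigma(0)| \leq \frac{X(\Delta, s_1, s_2)}{[s_1,s_2]^{1/2}(s_1, s_2)^{1/2}} \leq \frac{X(\Delta, s_1, s_2)^{1/2}}{[s_1,s_2]^{1/2}}
\]
using $(\Delta, s_1, s_2) \leq (s_1, s_2)$. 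For the nonzero frequencies, splitting by $d := (\xi, s_1, s_2)$ and applying the divisor bound yields a total contribution $\prec [s_1,s_2]^{1/2}$, matching the second term of the claimed estimate.

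The main obstacle lies in part~(i): the identification of $\Delta$ with the $b$-parameter of the preceding lemma, and the careful matching of $p$-adic valuations of the relevant gcds. In addition, the denominator parameter $d = p^{n_2-n_1}\ell_1$ may share prime factors with $s_2$, so the preceding lemma's hypothesis $(d,s)=1$ is not literally satisfied; this is resolved either by symmetrizing the reduction (also expanding in the opposite order) or by observing that $(cw+d, s) = 1$ continues to hold on the domain $(\mathbb{Z}/p^{n_2})^*$ (since $(\xi, p) = 1$ while $p \mid d$), so that the $p$-adic stationary phase analysis underlying the preceding lemma's proof still goes through.
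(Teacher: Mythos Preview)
Your outline for part~(ii) and for the equal-modulus case of part~(i) matches the paper's proof. The gap is in the unequal prime-power case $n_1<n_2$. There your parameters give $d=p^{n_2-n_1}\ell_1$, so $(d,s)>1$; you claim the preceding lemma's stationary-phase proof still applies because $(cw+d,s)=1$ on the domain. It does not: since $p^{n_2-n_1}$ divides each of $ac$, $ad$, $bd$, the numerator of
\[
\phi'(x)=\frac{acx^2+2adx+bd}{(cx+d)^2}
\]
is identically divisible by $p^{n_2-n_1}$, so $\phi'(x)\equiv 0\pmod{p^{\min(\alpha,\,n_2-n_1)}}$ for \emph{every} $x$. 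The critical set $\mathcal{R}$ is then not $O(1)$, and in fact the bound $(a,b,s)\big/\bigl(s^{1/2}(a,s)^{1/2}\bigr)=p^{\,n_1/2-n_2}$ asserted by the preceding lemma is \emph{false} here: the true size of your inflated sum $\Sigma'$ is $\asymp p^{-n_1/2}$, strictly larger whenever $n_2>n_1$. (Concretely, the phase $\phi$ descends to a function of $u\bmod p^{n_1}$, so inflation to modulus $p^{n_2}$ buys no further cancellation; the ``factor $p^{n_1-n_2}$'' you introduce is exactly undone.)

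The paper treats the case $n_1<n_2$ by a different route: after opening the Kloosterman sums and executing the $x$-sum, it first observes that the congruence forces $(\xi,p)=1$ and hence $(\Delta,p)=1$, and then substitutes $y:=\xi x_1+w_2\ell_1$ in the original variable $x_1$ (without inverting). This collapses the phase to the constant $-\Delta/\xi$ modulo $s_2$ plus $\xi^{-1}(y+\ell_1\ell_2/y)$ modulo $s_1$, so the sum is exactly $\sqrt{s_1}\,K_{s_1}(\ell_1\ell_2/\xi^2)$ times a unimodular constant, and the Weil bound gives $|\Sigma|\ll s_2^{-1/2}$ directly. Your argument can be repaired along these lines, but not by invoking the preceding lemma as a black box with $s=p^{n_2}$.
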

\begin{remark*}
  These estimates are not sharp if either $(a_1,s_1)$ or
  $(a_2,s_2)$ is large, but that case is unimportant for us.
  In fact, we have recorded \eqref{eq:stronger-corr-kl} only for
  completeness; the slightly weaker bound
  \eqref{eq:weaker-klooster-corr} is the relevant one for our
  applications.
  We note finally that $K_{s}$ is real-valued.
\end{remark*}

\begin{proof}
  We begin with \eqref{item:complete-exp-sums}.
  Each side of \eqref{eq:stronger-corr-kl}
  factors naturally as a product over primes,
  so we may assume that $s_i = p^{n_i}$ for some prime $p$.
  By the change of variables $x \mapsto b_1 b_2 x$,
  we may reduce further to the case $b_1 = b_2 = 1$,
  so that $\ell_i = a_i$.
  
  In the case that some $\ell_i$ is divisible by $p$, the
  quantity $K_{s_i}(\ell_i x)$ is independent of $x$, has
  magnitude at most $s_i^{-1/2}$, and vanishes if $n_i > 1$.
  The required estimate then follows in the stronger form
  $\Sigma \ll (s_1 s_2)^{-1/2}$ by opening the other
  Kloosterman sum and executing the sum over $x$.  We will thus
  assume henceforth that $\ell_1$ and $\ell_2$ are coprime to
  $p$.

  Write $w_i := s_i / (s_1,s_2)$,
  so that $w_1 s_2 = s_1 w_2 = [s_1,s_2]$
  and
  $\Delta = w_2^2 b_2 a_1 - w_1^2 b_1 a_2$.
  By opening the Kloosterman sums
  and summing over $x$,
  we obtain
  \begin{equation}\label{eq:sigma-after-parseval}
    \Sigma = 
    \frac{1}{\sqrt{s_1 s_2}}
    \mathop{
      \sum _{x_1(s_1)^*} \, \sum _{x_2(s_2)^*}
    } _{w_1 \ell_2 x_2^{-1} \equiv w_2 \ell_1 x_1^{-1} + \xi \,
      ([s_1,s_2])}
    e_{[s_1,s_2]}
    (w_2 x_1 - w_1 x_2).
  \end{equation}
  Consider first the case $s_1 = s_2 =: s$,
  so that $w_1 = w_2 = 1$ and $\Delta = \ell_1 - \ell_2$ and $[s_1,s_2] = (s_1,s_2) = s$.
  The subscripted identity in \eqref{eq:sigma-after-parseval} then shows
  that $x_2$ is determined uniquely by $x_1 =: x$
  and, after a short calculation, that
  \[
    \Sigma = \frac{1}{s} \sum_{x(s)^*}
    e_s(x \frac{\xi x + \Delta x}{\xi x + \ell_1}).
  \]
  By the previous lemma,
  it follows that
  \[
    \Sigma \ll
    \frac{(\Delta,\xi,s)}{s^{1/2} (\xi,s)^{1/2}},
  \]
  as required.

  Suppose now that $s_1 \neq s_2$.
  Without loss of generality,
  $s_1 < s_2$.
  Then $w_1 = 1$ and $w_2 = s_2/s_1$;
  in particular, $w_2$ is divisible by $p$.
  The summation condition
  in \eqref{eq:sigma-after-parseval}
  shows
  that
  $\Sigma = 0$ unless
  $(\xi,p) = 1$, as we henceforth assume.
  Since $(\ell_1 \ell_2, p) = 1$,
  we have $(\Delta, p) = 1$,
  so our goal is to show that
  $\Sigma \ll s_2^{-1/2}$.
  We introduce the variable
  \[
    y := \xi x_1 + w_2 \ell_1.
  \]
  Then
  \[
    x_1 = \frac{y - w_2 \ell_1}{\xi},
    \quad 
    x_2 =
    \frac{w_1 \ell_2 }{y}
    \frac{y - w_2 \ell_1}{\xi},
  \]
  and as $y$ runs over $(\mathbb{Z}/s_1)^*$,
  the pair $(x_1, x_2)$
  traverses
  the set indicated in \eqref{eq:sigma-after-parseval}.
  A short calculation
  gives
  \[
    w_2 x_1 -
    w_1 x_2
    =
    - \frac{\Delta }{\xi }
    +
    \frac{w_2}{\xi}
    ( y + \frac{\ell_2 \ell_1}{y}),
  \]
  hence
  \[
    \Sigma = 
    \frac{1}{\sqrt{s_1 s_2}}
    e_{s_2} (  - \frac{\Delta }{\xi } )
    \underbrace{
      \sum _{y(s_1)^*} 
      e_{s_1}
      (  \frac{1}{\xi} ( y + \frac{\ell_2 \ell_1}{y}) )
    }_{
      \sqrt{s_1}
      K_{s_1}(\ell_2 \ell_1/\xi^2)
    }.
  \]
  The required conclusion then follows
  from the Weil bound.

  To prove \eqref{item:incomplete-exp-sums},
  we first apply Poisson summation
  to write
  the LHS of \eqref{eq:incompl-klooster-corr}
  as
  \begin{equation}\label{eq:incompl-intermediate}
    X
    \sum_{\xi}
    \hat{V}
    (\frac{\xi }{[s_1,s_2]/X})
    \frac{1}{[s_1,s_2]}
    \sum_{x([s_1,s_2])}
    K_{s_1}(\ell_1 x) \overline{K_{s_2}(\ell_2 x)}
    e_{[s_1,s_2]}(\xi x),
  \end{equation}
  where $\hat{V}$ satisfies
  estimates analogous to those assumed for $V$.
  We then apply \eqref{eq:weaker-klooster-corr}.
  The $\xi = 0$
  term in \eqref{eq:incompl-intermediate}
  then contributes the first
  term on the RHS of \eqref{eq:incompl-klooster-corr},
  while an adequate
  estimate for the remaining terms
  follows from the consequence
  \[
    \sum_{\xi \neq 0}
    |\hat{V}|
    (\frac{\xi }{[s_1,s_2]/X})
    (\Delta,\xi,s_1,s_2)^{1/2}
    \prec [s_1,s_2]/X
  \]
  of the divisor bound.
\end{proof}

\section{Comparison with Munshi's approach}\label{sec:discussion}
We outline Munshi's approach \cite{MR3418527, 2016arXiv160408000M} to the sums $\Sigma$
arising as in 
\S\ref{sec:appr-funct-equat}
after a standard application of the approximate functional
equation, and compare with our own treatment.
For simplicity we focus on the most difficult range $N \approx  q^{3/2}$.

\subsection{Averaged Petersson formula}
Munshi employs the following decomposition of the diagonal symbol:
\begin{align}\nonumber
  \delta(m,n) &=  \frac{1}{B^\star}\sum_{b\in \mathcal{B}} \sum_{\psi(b)} (1-\psi(-1)) \sum_{f \in S_k(b,\psi)} {w_f}^{-1} \overline{\lambda_f(m)} \lambda_f(n) \\ \label{eqn:petersson-formula}
              & \, - 2 \pi i^{-k}  \frac{1}{B^\star}\sum_{b\in \mathcal{B}} \sum_{\psi(b)} (1-\psi(-1)) \sum_{c
                \equiv 0 (b)} \frac{S_\psi(m,n,c)}{c} J_{k-1} (\frac{4 \pi
                \sqrt{m n}}{c}).
\end{align}
Here $\mathcal{B}$ is
a suitable set
of natural numbers, $\psi$ runs over a suitable collection of odd Dirichlet
characters modulo $b \in \mathcal{B}$,
and $B^\star$ denotes the appropriate
normalizing factor.

\subsection{Munshi's initial transformations}
Set $A(n) := \lambda(1,n)$.
Munshi writes\footnote{
  For the sake of comparison,
  we note that Munshi used the notation
  $R,L,P,M$
  corresponding to our
  $R,S,T,q$.
}
\begin{equation}\label{mainobjectofinterest}
  \sum_{n \sim N} A(n) \chi(n) \approx \frac{1}{S} \sum_{s \sim S}
  \sum_{n \sim N} A(n) \sum_{r \sim NS}
  \chi(\frac{r}{s}) \delta(r,ns)
\end{equation}
where $s$ runs over primes of size $S$. Munshi applies \eqref{eqn:petersson-formula} to
$\delta(r,n\ell)$
with $\mathcal{B} = \{t q : t \sim T\}$, where $t$ runs over
primes of size $T$,
and the characters $\psi$ are taken to be trivial modulo $q$.
The use of \eqref{eqn:petersson-formula} produces two main
contributing terms,
$\mathcal{F}^M$
from the sum of Fourier coefficients
and $\mathcal{O}^M$
from the sum of Kloosterman sums,
given roughly by
\begin{equation}\label{Fourier}
  \mathcal{F}^M \approx \frac{1}{T^2S}\sum_{s} \sum_{t} \sum_{\psi(t)} \sum_{n \sim N} \sum_{r \sim NS} A(n) \chi(\frac{r}{s}) \sum_{f \in S_k(t q, \psi)} {\omega_f}^{-1} \overline{\lambda_f(r)}\lambda_f(n s)
\end{equation}
and 
\begin{equation}\label{off-diagonal}
  \mathcal{O}^M \approx \frac{1}{T^2 S}\sum_{s} \sum_{t} \sum_{\psi(t)}\sum_{n \sim N} \sum_{r \sim NS} A(n) \chi(\frac{r}{s}) \sum_{c \ll \sqrt{q} S/T} \frac{1}{c t q}S_\psi (r, n s;c t q)
\end{equation}
which Munshi then works to balance with the appropriate choices
of $S$ and $T$.
(The superscripted $M$
has been included to disambiguate from the closely related expressions
defined in \S\ref{sec:formula-sigma} of this paper.)
In \eqref{off-diagonal} we
sum over moduli $c$ up to the transition range of
the resulting $J$-Bessel function, which we
do not display for notational simplicity.
(For the analogous problem in spectral or $t$-aspects,
the $J$-Bessel function
plays an important analytic role;
cf. forthcoming work of Yongxiao Lin.)

\subsection{Outline of Munshi's method}
We now present a brief outline of Munshi's treatment of $\mathcal{F}^M$ and $\mathcal{O}^M$ (see \cite{2016arXiv160408000M} for details).
\subsubsection{Treatment of  $\mathcal{F}^M$}
\begin{enumerate}
\item Dualize the $n$-sum via the $\GL_3 \times \GL_2$ functional equation.
\item Dualize the $r$-sum via the $\GL_2 \times \GL_1$ functional equation.
\item Sum over $f$ via the Petersson trace formula.
  The diagonal contribution is negligible.  The off-diagonal contribution is a $c$-sum over Kloosterman
  sums of the form $S_\psi(t^2qn,rs;ctq)$ with $c \ll \sqrt{q} T^2$.
\item Factor the Kloosterman sums
  modulo $t$ and modulo $cq$.
  This yields Gauss sums modulo $t$; evaluate them.
  Sum over $\psi$ modulo $t$.
  Factor the remaining Kloosterman sum modulo $c$ and modulo
  $q$.
  The mod $q$ contribution gives a Ramanujan sum
  equal to $-1$.
\item The $n$-sum now oscillates only modulo $c$.  Apply $\GL_3$
  Voronoi and reciprocity.
\item Dualize
  the $c$-sum modulo $r$ via Poisson.
  Only the zero dual frequency contributes.
  It remains to estimate sums of the form
  \begin{equation}\label{keymiddlestep}
   \frac{\sqrt{q}}{T^4} \sum_{t\sim T} \sum_{s \sim S} \sum_{r \sim \sqrt{q}T/S} \sum_{n \sim T^3} A(n) \overline{\chi}(\frac{r s}{t})  S( - \frac{n q}{t}, 1; r s).
  \end{equation}
\item Pull the $n,r$ sums outside and apply Cauchy-Schwarz.
\item Conclude via Poisson in $n$.
\end{enumerate}  
Such a treatment produces the following bound
\begin{equation}\label{moralFbound}
\mathcal{F} \ll N \left[\frac{T}{q^{1/4}S^{1/2}}+\left(\frac{T S}{q^{1/2}}\right)^{1/4}+\textnormal{noise}_\mathcal{F}\right],
\end{equation}
where $\textnormal{noise}_\mathcal{F}$ comes from all of the other technical aspects resulting from working outside of the transition ranges and appropriately setting up the remaining object for each step of the above proof.
\subsubsection{Treatment of $\mathcal{O}^M$}
\begin{enumerate}
\item Factor the Kloosterman sums
  modulo $t$ and $cq$.
  Evaluate
  the sum over $\psi$; this simplifies the Kloosterman
  sums modulo $t$ to additive characters.
  Apply reciprocity.
  One now has oscillations only modulo $cq$.
\item Apply Poisson to the $r$ sum.
  Only the zero frequency contributes non-negligibly
  to the dual sum.
  One is now left with estimating sums of the form
  \begin{equation}\label{mainoffdiag}
   \frac{1}{TS \sqrt{q}} \sum_{t\sim T}  \sum_{s \sim S} \sum_{c \ll \sqrt{q} S/T} \sum_{n\sim N} A(n)  \chi(\frac{t c}{s}) \mathcal{D}(\frac{n s}{ t c};q),
  \end{equation}
  where
  \begin{equation}
    \mathcal{D}(u;q):=\sum_{\substack{b(q) \\ (b(b-1),q)=1}} \overline{\chi}(b-1) e_q((b^{-1} - 1) u)
  \end{equation}
\item Apply Cauchy--Schwarz with the $n$-sum outside.
\item Conclude via Poisson in $n$.
\end{enumerate}
Such a treatment produces the following bound
\begin{equation}\label{moralObound}
\mathcal{O} \ll N \left[\frac{q^{1/4}}{T}+\frac{S}{T}+\textnormal{noise}_\mathcal{O}\right]
\end{equation}
where $\textnormal{noise}_\mathcal{O}$ comes from all of the other technical aspects resulting from working outside of the transition ranges and appropriately setting up the remaining object for each step of the above proof.
\subsubsection{Optimization}
Ignoring the contributions from $\textnormal{noise}_\mathcal{F}$ and  $\textnormal{noise}_\mathcal{O}$ in \eqref{moralFbound} and \eqref{moralObound}, one first restricts $S<q^{1/4}$, sets 
$$
\frac{T}{q^{1/4}S^{1/2}}=\left(\frac{TS}{q^{1/2}}\right)^{1/4}
$$
to get that $S=Tq^{-1/6}$, and then sets 
$$
\frac{T}{q^{1/4}S^{1/2}}=\frac{q^{1/4}}{T}
$$
to get that $T=q^{5/18}$ and $S=q^{2/18}$ which would produce a combined bound of 
\begin{equation}\label{moralbound}
\sum_{n \sim N} A(n)\chi(n) \ll N \left[q^{-1/36}+\textnormal{noise}_{\mathcal{F}+\mathcal{O}}\right].
\end{equation}
Therefore, the best possible bound that one could hope to
achieve is a saving over the convexity bound of size
$q^{-1/36}$. However, due to all of the technical obstacles that
present themselves in the course of the proof, Munshi's
original approach \cite{MR3418527}
produced a saving of $q^{-1/1612}$,
improved in the preprint \cite{2016arXiv160408000M}
to $q^{-1/308}$.

\subsection{Discovering the key identity \eqref{eq:formula-chi}}

After a topics course taught by the first author in the Fall of 2016
and subsequent discussions with the second author in June 2017, the key identity in this paper was discovered hidden within Munshi's work.  Indeed, starting from \eqref{keymiddlestep} in the treatment of $\mathcal{F}^M$, if one were to now apply Voronoi summation in the $n$ sum followed by an application of reciprocity for the resulting additive characters, then one would need to instead analyze sums of the form
\begin{equation} \label{newexplanation}
 \frac{1}{T^2}  \sum_{t\sim T} \sum_{s \sim S} \sum_{r \sim \sqrt{q}T/S} \sum_{n \sim q^{3/2}} \overline{A}(n) \overline{\chi}(\frac{r s}{t})  e_q(-\frac{n t}{r s}).
\end{equation}
Viewing $-t/rs$ as the $u$ in \eqref{eq:formula-chi}, we see that an application of Poisson summation in $r$ returns us to the dual of our original object of interest (from the $h=0$ frequency of the dual) plus a sum which is the ``$\GL_3$ dual'' of $\mathcal{O}^M$ (from the dual non-zero $h$ frequencies) as expressed in \eqref{mainoffdiag}
\begin{equation}\label{newO}
\frac{1}{TS\sqrt{q}}\sum_{t\sim T} \sum_{s \sim S} \sum_{h \ll \sqrt{q}S/T} \sum_{n \sim q^{3/2}} \overline{A}(n) S_{\overline{\chi}}(\frac{h t}{s},n,q).
\end{equation}
By ``$\GL_3$ dual,'' we mean that Voronoi summation in $n$
applied to \eqref{newO} returns one to objects of the form
\eqref{mainoffdiag}. This observation led to the simplification presented in this
paper whereby many of the initial steps of Munshi's argument, as outlined above, are eliminated.

\subsection*{Acknowledgements}
This work was initiated during a visit of PN to RH at The Ohio
State University in June 2017. RH thanks PN for taking the time
to schedule that visit to Columbus on his return to ETH from
MSRI.  RH also thanks the Department of Mathematics at The Ohio
State University for giving him the opportunity to teach a
topics course on R. Munshi's delta method during the Fall 2016
semester. PN thanks The Ohio State University, the STEAM
Factory, and the Erd\H{o}s Institute for their hospitality.
We thank Ritabrata Munshi for encouragement
and Yongxiao Lin for helpful corrections and feedback on an earlier draft.
We thank the referee
for helpful feedback
which has led
to corrections and
improvements to the exposition.

\bibliography{refs}{}
\bibliographystyle{plain}
\end{document}